\definecolor{darkblue}{rgb}{0,0,.5}
\definecolor{darkgreen}{rgb}{.2,0.5,.2}
\numberwithin{equation}{section}
\newtheorem{thm}{Theorem}
\newtheorem{lm}{Lemma}[section]
\newtheorem{conj}[lm]{Conjecture}
\newtheorem{cl}[lm]{Corollary}
\newtheorem{prop}[lm]{Proposition}
\theoremstyle{remark}
\newtheorem{ex}[lm]{Example}
\newtheorem{rmk}[lm]{Remark}
\theoremstyle{definition}
\newcommand{\gt}{\mathfrak}
\newcommand{\rk}{\mathrm{rk\,}}
\newcommand {\cS}{{\mathcal S}}
\newcommand {\mK}{{\mathbb C}}
\renewcommand{\le}{\leqslant}
\renewcommand{\ge}{\geqslant}
\font\euszw=eusm10 scaled 1200%
\font\eusac=eusm7 scaled 1200%
\font\eusacc=eusm7 scaled 1000%
\newcommand{\U}{{\mathcal U}}
\newcommand{\bth}{\begin{thm}}
\renewcommand{\eth}{\end{thm}}
\newcommand{\bpr}{\begin{prop}}
\newcommand{\epr}{\end{prop}}
\newcommand{\ble}{\begin{lm}}
\newcommand{\ele}{\end{lm}}
\newcommand{\bco}{\begin{cl}}
\newcommand{\eco}{\end{cl}}
\newcommand{\bex}{\begin{ex}}
\newcommand{\eex}{\end{ex}}
\newcommand{\bre}{\begin{rmk}}
\newcommand{\ere}{\end{rmk}}
\newcommand{\bcj}{\begin{conj}}
\newcommand{\ecj}{\end{conj}}
\newcommand{\q}{\mathfrak{q}}
\newcommand{\bal}{\begin{aligned}}
\newcommand{\eal}{\end{aligned}}
\newcommand{\beq}{\begin{equation}}
\newcommand{\eeq}{\end{equation}}
\newcommand{\ben}{\begin{equation*}}
\newcommand{\een}{\end{equation*}}
\newcommand{\bpf}{\begin{proof}}
\newcommand{\epf}{\end{proof}}
\def\beql#1{\begin{equation}\label{#1}}
\begin{document}
\hfill {\scriptsize  June 25, 2018} 
\vskip1ex

\title[The Jacobian of $t$-shifted invariants]{A combinatorial identity  for the 
Jacobian of $t$-shifted invariants}
\author[O.\,Yakimova]{Oksana Yakimova}
\address[O.\,Yakimova]{Universit\"at zu K\"oln,
Mathematisches Institut, Weyertal 86-90, 50931 K\"oln, Deutschland}
\email{yakimova.oksana@uni-koeln.de}
\thanks{This research  is supported by  
the Heisenberg-Stipendium of the DFG}
\begin{abstract}
Let $\gt g$ be a simple Lie algebra. There are classical formulas for 
the Jacobians of the generating invariants of the Weyl group of $\gt g$ and of the images under the
Harich-Chandra projection of the generators of $\mathcal{ZU}(\gt g)$. 
We present a modification of these formulas related to Takiff Lie algebras.  
\end{abstract}
\maketitle

\section*{Introduction}

Let $\gt g$ be a simple complex Lie algebra, $\gt h\subset \gt g$  be a Cartan subalgebra. 
Fix a triangular decomposition $\gt g=\gt n^-\oplus\gt h\oplus\gt n^+$.
Let $\Delta\subset\gt h^*$ be the corresponding root system with 
$\Delta^+\subset \Delta$ being the subset of positive roots. 
Define $\rho=\frac{1}{2}\sum\limits_{\alpha\in\Delta^+} \alpha$. 
Let $W=W(\gt g,\gt h)$ be the Weyl group of $\gt g$. 
Set $n=\rk\gt g$ and let $d_i{-}1$ with $1\le i\le n$ be the exponents of $\gt g$.  
 For $\alpha\in\Delta^+$, let $\{f_{\alpha},h_\alpha,e_\alpha\}\subset\gt g$ be an  $\gt{sl}_2$-triple
 with $e_\alpha\in\gt g_\alpha$. 
 Finally    choose a basis 
$\{h_1,\ldots,h_n\}$ of $\gt h$.

For polynomials $P_1,\ldots, P_n\in\cS(\gt h)\cong \mK[\gt h^*]$, 
the  Jacobian  $J(\{P_i\})$ is defined by the 
property
$$
 d P_1\wedge d  P_2\wedge\ldots\wedge d  P_n=J(\{P_i\}) d h_1\wedge\ldots\wedge d h_{n}. 
$$
If 
$\hat P_1,\ldots, \hat P_n\in\cS(\gt h)^W$ are generating invariants 
(with $\deg  \hat P_i=d_i$), then 
$$J(\{\hat P_i\})=C\prod\limits_{\alpha\in\Delta^+} h_\alpha \ \text{ with } \ C\in \mK,  C\ne 0$$ 
by a classical argument, which is presented, for example, in 
 \cite[Sec.~3.13]{Ref-b}.

Let $\mathcal{ZU}(\gt g)$ denote the centre of the enveloping algebra $\mathcal{U}(\gt g)$. 
Then $\mathcal{ZU}(\gt g)$ has a set $\{{\mathcal P}_i \mid 1\le i\le n\}$ of algebraically  independent 
generators such that 
${\mathcal P}_i\in \U_{d_i}(\gt g)$. 
Let $P_i\in\cS(\gt h)$ be the image of ${\mathcal P}_i$ under the Harish-Chandra projection. 
Then $\hat P_i\in\cS(\gt h)^W$ for $\hat P_i(x)=P_i(x-\rho)$, see e.g. \cite[Sec.~7.4]{Dix}, and 
$$J(\{P_i\})=C\prod\limits_{\alpha\in\Delta^+} (h_\alpha+\rho(h_\alpha)). $$ 

For any complex Lie algebra $\gt l$, let $\varpi\!: \cS(\gt l)\to {\mathcal U}(\gt l)$ be the canonical symmetrsation map. 
Let $\cS(\gt l)^{\gt l}$ denote the ring of symmetric $\gt l$-invariants. 
Since $\varpi$ is an isomorphism of $\gt l$-modules, it provides an isomorphism 
of vector spaces $\cS(\gt l)^{\gt l}\cong \mathcal{ZU}(\gt l)$.
 
Suppose next that ${\mathcal P}_i=\varpi(H_i)$ is the symmetrisation of 
$H_i$ and that $H_i\in\cS(\gt g)^{\gt g}$ is a homogeneous generator of degree $d_i$. 
Let $T\!: \gt g\to \gt g[t]$ be the $\mathbb C$-linear map sending each $x\in\gt g$ to $xt$. 
Then $T$ extends uniquely to the commutative algebras homomorphism 
$$T\!:~\cS(\gt g)\to \cS(\gt g[t]).$$    
Set ${H}_i^{[1]}= T(H_i)$ 
and  ${\mathcal P}_i^{[1]}=\varpi({H}_i^{[1]})$. Here ${\mathcal P}_i^{[1]}\in\U(t\gt g[t])$.  

The  triangular decomposition of $\gt g$ 
extends to 
$\gt g[t]$ as $\gt g[t]=\gt n^-[t]\oplus\gt h[t]\oplus\gt n^+[t]$. 
Let $P_i^{[1]}\in\cS(t\gt h[t])$ be the image 
of ${\mathcal P}_i^{[1]}$ under the 
Harish-Chandra projection. In order to define the Jacobian 
$J(\{P_i^{[1]}\})$ as an element of $\cS(\gt h)$, set at first 
$\partial_{x}(x t^k)=k t^{k-1} $ for every $x\in\gt h$ and every $k\ge 1$, $\partial_{h_i}(h_j t^k)=0$ for $i\ne j$. 
Then the desired formula reads 
$$
J(\{P_i^{[1]}\}) = \det(\partial_{h_j} P_i^{[1]})|_{t=1}.  
$$

\begin{thm}\label{th-eq}
We have the following 
 identity
$$
J(\{P_i^{[1]}\})= C\prod\limits_{\alpha\in\Delta^+} (h_\alpha+\rho(h_\alpha)+1). 
$$
\end{thm}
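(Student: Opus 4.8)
The plan is to follow, in this $t$-shifted setting, the scheme underlying both classical identities recalled above: determine the leading term and the degree of $J(\{P_i^{[1]}\})$, then show it is divisible by each linear form $h_\al+\rho(h_\al)+1$, $\al\in\De^+$, and finally match the two. Since distinct positive roots are non-proportional, these linear forms are pairwise coprime in $\cS(\gt h)$, and their product has degree $\sum_i(d_i-1)=|\De^+|$. So once we know that $J(\{P_i^{[1]}\})$ has degree at most $|\De^+|$ with leading term $C\prod_\al h_\al$ and is divisible by every $h_\al+\rho(h_\al)+1$, it must equal $C\prod_\al(h_\al+\rho(h_\al)+1)$.

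The leading term and degree come from passing to associated graded algebras. The canonical symmetrisation has trivial principal symbol, so $\gr\mathcal{P}_i^{[1]}=T(H_i)$, a homogeneous element of $\cS^{d_i}(\gt g\ts t)\subset\cS(t\gt g[t])$. The decomposition $\mathcal{U}(t\gt g[t])=\mathcal{U}(t\gt h[t])\oplus\bigl(t\gt n^-[t]\,\mathcal{U}(t\gt g[t])+\mathcal{U}(t\gt g[t])\,t\gt n^+[t]\bigr)$ is compatible with the PBW filtration and its associated graded is the parallel decomposition of $\cS(t\gt g[t])$; hence the Harish-Chandra projection is filtered and its associated graded is the restriction of polynomial functions from $t\gt g[t]$ to $t\gt h[t]$. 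Consequently $\mathcal{P}_i^{[1]}$ has Harish-Chandra image of degree exactly $d_i$, whose top homogeneous component is $T(H_i)|_{t\gt h[t]}=T(H_i|_{\gt h})$, a degree-$d_i$ polynomial in the variables $h_j t$ alone. On polynomials in the $h_j t$ only, $\partial_{h_j}$ coincides with $\partial/\partial(h_j t)$, and $|_{t=1}$ then identifies $h_j t$ with $h_j$; therefore the top homogeneous component of $\det(\partial_{h_j}P_i^{[1]})|_{t=1}$ is exactly the classical Jacobian $J(\{H_i|_{\gt h}\})$ of the Chevalley restrictions, equal to $C\prod_\al h_\al$ with $C\neq0$ by the argument recalled in the Introduction. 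Since $\deg P_i^{[1]}=d_i$, the determinant has degree at most $\sum_i(d_i-1)=|\De^+|$, and the nonvanishing of the top component shows the degree is exactly $|\De^+|$ with leading term $C\prod_\al h_\al$.

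The core of the theorem is the divisibility. Fix $\al\in\De^+$ and let $D^{(\al)}=\sum_j(h_\al)_j\,\partial_{h_j}$ be the combination of columns corresponding to the coroot $h_\al=\sum_j(h_\al)_j h_j$. Since this column operation changes $\det(\partial_{h_j}P_i^{[1]})$ only by a nonzero constant, it suffices to prove that for every $i$ the polynomial $(D^{(\al)}P_i^{[1]})|_{t=1}$ is divisible by $h_\al+\rho(h_\al)+1$, equivalently that it vanishes on $\{\mu\in\gt h^*:\mu(h_\al)=-\rho(h_\al)-1\}$. I expect this to follow from a functional equation for $P_i^{[1]}$: a $t$-deformation of the $\rho$-shifted reflection identity $P_i(s_\al\cdot\mu)=P_i(\mu)$ of the classical case, namely a twisted involution of $(t\gt h[t])^*$ built from $s_\al$ and the $\gt{sl}_2$-triple $\{f_\al t,\,h_\al t^2,\,e_\al t\}$ that fixes $P_i^{[1]}$, whose fixed locus — after differentiation and the substitution $|_{t=1}$ — supplies the required hyperplane. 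Establishing this equation, or else computing $P_i^{[1]}$ outright via a current-algebra version of the Harish-Chandra homomorphism and checking the divisibility by hand, is the combinatorial heart of the argument.

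The delicate point, and where I expect the real difficulty, is the exact value of the shift, in particular the extra summand $1$. The term $\rho(h_\al)$ is the familiar Harish-Chandra correction; the additional $1$ arises because in $t\gt g[t]$ the commutator $[e_\al t,f_\al t]=h_\al t^2$ lies in $t$-degree $2$, so the Harish-Chandra image of $\mathcal{P}_i^{[1]}$ picks up terms proportional to $h_\al t^2$, on which $\partial_{h_j}$ returns $2t$ rather than $t$. Tracking the cumulative effect of all such higher-$t$-power contributions through the reorderings that compute $P_i^{[1]}$, and proving uniformly in $\gt g$ that they assemble into precisely $\rho(h_\al)+1$ (and not, say, $2\rho(h_\al)$), is exactly where the combinatorial identity of the title is needed.
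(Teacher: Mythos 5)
Your overall plan---compute the leading term, show divisibility by each $h_\al+\rho(h_\al)+1$, and match degrees using that the linear forms are pairwise coprime and that $\sum_i(d_i-1)=|\De^+|$---is a logically sound reduction, and your first step (the top homogeneous component of $J(\{P_i^{[1]}\})$ is $C\prod_{\al}h_\al$) is correct and is exactly the paper's Lemma~\ref{highest}. But the divisibility step, which is the entire content of the theorem, is not proved: you write that you ``expect'' it to follow from a $t$-deformed reflection identity fixed by a ``twisted involution'' built from $s_\al$ and $\{f_\al t,\,h_\al t^2,\,e_\al t\}$, and you explicitly defer ``establishing this equation'' as ``the combinatorial heart of the argument.'' No such functional equation is known or constructed here; note also that $\{f_\al t,\,h_\al t^2,\,e_\al t\}$ is not an $\gt{sl}_2$-triple in $\gt g[t]$ (e.g.\ $[h_\al t^2,e_\al t]=2e_\al t^3$), so there is no ready-made reflection to twist. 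The classical argument for $J(\{\hat P_i\})$ rests on genuine $W$-invariance of the $\hat P_i$; the $P_i^{[1]}$ live in $\cS(t\gt h[t])$, a polynomial ring in the infinitely many variables $h_jt^k$, and carry no evident $W$-symmetry. So the proposal is a reduction of the theorem to an unproved claim that is essentially equivalent to the theorem itself.

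The paper avoids proving divisibility directly. It first shows (Lemma~\ref{lc} plus the extremal projector of Asherova--Smirnov--Tolstoy) that the zero set of $J(\{P_i^{[1]}\})$ is contained in the union of the hyperplanes $h_\al+\rho(h_\al)=-k$, $k\ge 1$: the vanishing of the Jacobian at $\mu$ is translated into a relation ${\mathcal R}v_\la\in\gt n^-\U(\gt b^-)(\gt n^+u)v_\la$ in a Verma module for $\gt{gl}_\ell(\mK)$, the projector $p$ (defined exactly off those hyperplanes) kills that subspace while fixing the highest weight vector ${\mathcal R}v_\la$, and a symbol computation using the bi-grading of \cite{py-feig} rules out ${\mathcal R}v_\la=0$. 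This only yields factors $h_\al+\rho(h_\al)+k_\al$ with unknown $k_\al\ge 1$; the normalisation $k_\al=1$ is then forced by evaluating at the origin, where $J(\{P_i^{[1]}\})(0)=(d_1\cdots d_n)J(\{P_i\})(0)=|W|\,C\prod_\al\rho(h_\al)$ combines with Kostant's identity $\prod_\al\frac{\rho(h_\al)+1}{\rho(h_\al)}=|W|$ (Lemma~\ref{free}). This evaluation at $0$ is an ingredient your scheme omits entirely, and it is what replaces the divisibility you could not establish. If you want to salvage your approach, you would need to prove vanishing of $J(\{P_i^{[1]}\})$ on each hyperplane $\mu(h_\al)=-\rho(h_\al)-1$ directly; the paper's appended example shows this can be done by hand for simple roots, but a uniform argument for all positive roots is precisely what is missing.
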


Our proof of Theorem~\ref{th-eq} interprets the zero set of $J(\{P_i^{[1]}\})$
 in terms of the {\it Takiff Lie algebra} $\gt q=\gt g[u]/(u^2)$ and  
then uses the {\it extremal projector}  associated with $\gt g$, see Section~\ref{sec-proj} for the definition. 

In 1971, Takiff proved that $\cS(\gt q)^{\gt q}$ is a polynomial ring  whose Krull 
dimension equals $2\rk\gt g$~\cite{takiff}. This has started a serious investigation of these 
Lie algebras and their generalisations, see e.g. \cite{k-T} and reference therein.  
Verma modules and an analogue of the Harish-Chandra homomorphism for $\gt q$
were defined and studied in \cite{Geof,Wil}. We remark that $\gt q$-modules appearing in this paper are 
essentially different. 

\section{Several combinatorial formulas}


Keep the notation of the introduction.  In particular, $H_i\in\cS(\gt g)^{\gt g}$ stands for 
a homogeneous generator of degree $d_i$, $P_i$ is the image of ${\mathcal P}_i=\varpi(H_i)$ under 
the Harish-Chandra projection, $\hat P_i\in\cS(\gt h)^W$ is the $({-}\rho)$-shift of $P_i$, i.e., 
$\hat P_i(x)=P_i(x-\rho)$, and $P_i^{[1]}$ is the image of $\varpi(T(H_i))$ under  
the Harish-Chandra projection related to $\gt g[t]$. Let also $P_i^{\circ}$ be the highest degree 
component of $P_i$.  Then $P_i^{\circ}={H_i}|_{\gt h}$. 
By the Chevalley restriction theorem, 
the polynomials $P_i^{\circ}$ with $1\le i\le n$ generate $\cS(\gt h)^W$.
The constant $C$ is fixed by the equality $J(\{P_i^{\circ}\})=C\prod\limits_{\alpha\in\Delta^+} h_\alpha$. It is clear that $J(\{P_i^{\circ}\})=J(\{\hat P_i\})$.

\begin{lm}\label{highest}
The highest degree component of $J(\{P_i^{[1]}\})$ is equal to $C  \prod\limits_{\alpha\in\Delta^+} h_\alpha$. 
\end{lm}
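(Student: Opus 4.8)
The plan is to determine the top homogeneous component of $J(\{P_i^{[1]}\})$ by reducing to its \emph{classical limit}. Throughout, polynomial degree refers to the standard grading of a symmetric algebra, counting the number of generators in a monomial; under $t=1$, i.e. $h_l t^k\mapsto h_l$, it matches the grading of $\cS(\gt h)$, and the operator $\partial_{h_j}$ lowers it by exactly one (it sends the generator $h_l t^k$ to $\delta_{jl}\,k\,t^{k-1}$, which carries no $\gt h$-variable).

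First I would record a grading symmetry: $\gt g[t]$ is graded by assigning degree $k$ to $x t^k$, and since $[\gt g t^a,\gt g t^b]\subseteq\gt g t^{a+b}$ this passes to $\U(\gt g[t])$, is compatible with the triangular decomposition, and hence with the Harish-Chandra projection. Because $H_i$ is homogeneous of degree $d_i$, the element $H_i^{[1]}=T(H_i)$ is homogeneous of $t$-degree $d_i$ and lies in $\cS(\gt g t)$; consequently $\varpi(H_i^{[1]})$ and its Harish-Chandra image $P_i^{[1]}\in\cS(t\gt h[t])$ are homogeneous of $t$-degree $d_i$. Writing a monomial of $P_i^{[1]}$ as $\prod_a h_{j_a}t^{k_a}$ with $\sum_a k_a=d_i$, its polynomial degree is the number of factors, which is at most $d_i$ and equals $d_i$ precisely when every $k_a=1$; thus the polynomial-degree-$d_i$ component of $P_i^{[1]}$ lies in $\cS(\gt h t)$.

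Next I would identify that top component with $T(P_i^{\circ})$. Among the monomials of $H_i$, only those lying in $\cS(\gt h)$ can contribute to polynomial degree $d_i$ after symmetrisation and the Harish-Chandra projection: any monomial carrying a factor in $\gt n^+[t]$ or $\gt n^-[t]$ is, after being brought to triangular order, either annihilated by the projection (if the offending factor is not removed) or obtained by using at least one commutator $[\gt g t,\gt g t]\subseteq\gt g t^2$, which drops the polynomial degree below $d_i$; and since $\gt h t$ is abelian in $\gt g[t]$, symmetrisation is the identity on $\cS(\gt h t)$. Hence the polynomial-degree-$d_i$ component of $P_i^{[1]}$ equals $T\bigl(H_i|_{\gt h}\bigr)=T(P_i^{\circ})$. (Equivalently, one may argue that the Harish-Chandra projection for $\gt g[t]$ is PBW-filtered with associated graded the projection of $\cS(\gt g[t])$ along the ideal generated by $\gt n^-[t]+\gt n^+[t]$, whose restriction to $\cS(\gt g t)$ is $T(f)\mapsto T(f|_{\gt h})$, applied to the symbol $H_i^{[1]}$ of $\mathcal{P}_i^{[1]}$.) This middle step --- pinning down which symmetrised monomials survive to top polynomial degree under the Harish-Chandra projection --- is where the real content lies; everything else is bookkeeping with the two gradings.

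Finally I would assemble the pieces. Since $\partial_{h_j}$ lowers polynomial degree by exactly one, the polynomial-degree-$(d_i{-}1)$ component of $\partial_{h_j}P_i^{[1]}$ is $\partial_{h_j}$ applied to $T(P_i^{\circ})$; as $T(P_i^{\circ})$ uses only the generators $h_l t$ and $\partial_{h_j}(h_l t)=\delta_{jl}$, this component is already free of $t$ and equals $\frac{\partial}{\partial h_j}P_i^{\circ}$ under $h_l t\mapsto h_l$. The entries of the matrix $\bigl(\partial_{h_j}P_i^{[1]}\bigr)$ have polynomial degree at most $d_i-1$ in row $i$, so the component of its determinant in the top degree $\sum_i(d_i-1)=|\Delta^+|$ is the determinant of the matrix of degree-$(d_i-1)$ row components, i.e. $\det\bigl(\frac{\partial}{\partial h_j}P_i^{\circ}\bigr)=J(\{P_i^{\circ}\})=C\prod_{\alpha\in\Delta^+}h_\alpha$; since this is nonzero and $t=1$ does not alter polynomial degree, it is exactly the highest-degree component of $J(\{P_i^{[1]}\})$, as claimed.
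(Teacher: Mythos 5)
Your argument is correct and follows essentially the same route as the paper's (much terser) proof: identify the top polynomial-degree component of $P_i^{[1]}$ with $T(P_i^{\circ})$, observe that its $\partial_{h_j}$-derivatives at $t=1$ coincide with $\partial_{h_j}P_i^{\circ}$, and conclude that the top component of the determinant is $J(\{P_i^{\circ}\})=C\prod_{\alpha\in\Delta^+}h_\alpha$. Your middle step, justifying via the $t$-grading and the Harish--Chandra projection why only monomials of $H_i$ lying in $\cS(\gt h)$ survive to top polynomial degree, fills in a detail the paper leaves implicit, but the approach is the same.
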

\begin{proof}
The highest degree component of $P_i^{[1]}$ is $T(P_i^{\circ})\in\cS(\gt h t)$. 
Each monomial of $P_i^{[1]}$ is of the form $(x_1t)\ldots (x_{d_i} t)$ with $x_j\in \gt h$ for each $j$. 
By the construction,  $\partial_{h_j} T(P_i^{\circ}) |_{t=1} = \partial_{h_j} P_i^{\circ}$. 
The result follows. 
\end{proof}
In order to prove the next lemma, we need a well-known equality, namely 
$\prod\limits_{i=1}^n d_i=|W|$.

\begin{lm}\label{free}
We have $J(\{P_i^{[1]}\})(0)= C  \prod\limits_{\alpha\in\Delta^+} (\rho(h_\alpha)+1)$. 
\end{lm}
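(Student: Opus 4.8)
The plan is to evaluate $J(\{P_i^{[1]}\})$ at the origin directly. At $t = 1$ and at the point $0 \in \gt h^*$, the relevant data is the restriction of the Harish-Chandra image $P_i^{[1]}$ to the line $t = 1$, i.e. we pass from $\cS(t\gt h[t])$ to $\cS(\gt h)$ by setting $t = 1$, and then evaluate the resulting polynomial (after applying the partial derivatives $\partial_{h_j}$) at $0$. The point is that $\varpi(T(H_i))$, as an element of $\U(t\gt g[t])$, becomes under the specialisation $t = 1$ essentially an element of $\U(\gt g)$: the map $x t \mapsto x$ extends to an algebra homomorphism $\U(t\gt g[t]) \to \U(\gt g)$, and under this homomorphism $\varpi(T(H_i)) \mapsto \varpi(H_i) = {\mathcal P}_i$. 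So after specialising $t = 1$, the Harish-Chandra image $P_i^{[1]}$ becomes exactly $P_i$, the Harish-Chandra image of ${\mathcal P}_i$, which by the discussion in the introduction satisfies $\hat P_i(x) = P_i(x - \rho) \in \cS(\gt h)^W$.

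Thus, after the $t = 1$ specialisation, $J(\{P_i^{[1]}\})|_{t=1} = \det(\partial_{h_j} P_i)$, and this should coincide with the classical Jacobian $J(\{P_i\}) = C \prod_{\alpha \in \Delta^+}(h_\alpha + \rho(h_\alpha))$ stated in the introduction. The only subtlety to check is that the definition of $\partial_{h_j}$ on $\cS(t\gt h[t])$ (which only differentiates the degree-one-in-$t$ variables and kills cross terms $h_i t^k$ for $i \neq j$) really does reduce, after $t = 1$, to the ordinary partial derivative $\partial/\partial h_j$ on $\cS(\gt h)$; this is immediate from the prescription $\partial_x(xt^k) = kt^{k-1}$, which at $t = 1$ gives $k$, matching the ordinary derivative of $x^k$ on the relevant variable, together with the fact that each monomial of $P_i^{[1]}$ is a product of terms $x t$ with $x \in \gt h$, so every variable appears with the same $t$-degree. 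Hence $J(\{P_i^{[1]}\})(0) = J(\{P_i\})(0) = C \prod_{\alpha \in \Delta^+} \rho(h_\alpha)$ — but wait, this is off by the shift: evaluating $h_\alpha + \rho(h_\alpha)$ at $0$ gives $\rho(h_\alpha)$, not $\rho(h_\alpha) + 1$.

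The resolution, and the genuine content of the lemma, is that the specialisation $t = 1$ is \emph{not} the same as evaluating the full polynomial $J(\{P_i^{[1]}\})$ at $0 \in \gt h^*$; rather $J(\{P_i^{[1]}\})$ is by definition already an element of $\cS(\gt h)$ (obtained by the prescribed differentiation-and-specialisation), and one must evaluate \emph{that} polynomial at $0$. The correct approach is therefore to track the constant term of $\det(\partial_{h_j} P_i^{[1]})|_{t=1}$ as a polynomial in $\gt h$. This requires understanding not just the top-degree part of each $P_i^{[1]}$ (done in Lemma~\ref{highest}) but the \emph{lowest} part — namely the part of $\varpi(T(H_i))$ that, after Harish-Chandra projection and $t=1$ specialisation, survives to contribute a constant to $\partial_{h_j} P_i^{[1]}$. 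I expect one relates $P_i^{[1]}$ at $t = 1$ to the shifted invariant: since $T(H_i)$ plugged at $t = 1$ recovers $H_i$ but the symmetrisation and Harish-Chandra projection interact with the grading, the net effect is that $P_i^{[1]}|_{t=1}(x)$ equals $P_i$ evaluated at a point shifted not by $\rho$ but by $\rho$ together with an extra unit coming from the $t$-grading — precisely the source of the ``$+1$''. Concretely, one shows $P_i^{[1]}|_{t=1}(x) = \hat P_i(x + \rho + \varpi\text{-correction})$, and then $J = \det(\partial_{h_j}\hat P_i)(x + \rho + 1\cdot\mathrm{shift})$, whose value at $x = 0$ is $C\prod_{\alpha}(\rho(h_\alpha) + 1)$.

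The main obstacle will be the bookkeeping in the Harish-Chandra projection for $\gt g[t]$: one must verify that the low-order ($t$-degree) terms produced by symmetrising $T(H_i)$ and projecting reassemble, upon setting $t = 1$, into the $(-\rho - 1)$-shift of the Chevalley invariant rather than merely the $(-\rho)$-shift. The identity $\prod_{i=1}^n d_i = |W|$ flagged before the lemma is the consistency check: the right-hand side $C\prod_\alpha(\rho(h_\alpha)+1)$ must be a nonzero constant, and comparing with $C \prod_\alpha \rho(h_\alpha) = J(\{P_i\})(0)$ — each factor incremented by one — the product formula $\prod(\rho(h_\alpha)+1)$ is exactly $\prod d_i = |W|$ (this is the classical evaluation, e.g. via the fact that $\rho(h_\alpha)$ ranges suitably over the exponents), confirming the normalisation is internally consistent. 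So the proof is: (i) specialise $t=1$ to land in $\U(\gt g)$; (ii) identify the specialised Harish-Chandra image as a \emph{double} shift of the Chevalley invariant — by $\rho$ from the ordinary Harish-Chandra projection and by $1$ from the $t$-grading; (iii) apply the classical Jacobian formula and evaluate at $0$; (iv) cross-check against $\prod d_i = |W|$.
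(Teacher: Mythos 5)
There is a genuine gap, and the proposed resolution goes in the wrong direction. You correctly establish $P_i^{[1]}|_{t=1}=P_i$ and correctly sense that the naive answer $C\prod_{\alpha}\rho(h_\alpha)$ is not the right one, but your claim that every monomial of $P_i^{[1]}$ is a product of factors $xt$ with $x\in\gt h$ (so that $\partial_{h_j}$ at $t=1$ reduces to the ordinary derivative) is false: only the \emph{top-degree} component $T(P_i^\circ)$ has that form, which is exactly the content of Lemma~\ref{highest}. The lower-order terms carry higher powers of $t$ on fewer $\gt h$-variables --- in the paper's $\gt{sl}_2$ example $P^{[1]}=(ht)^2+2ht^2$, and the linear term $2ht^2$ is precisely what matters at the origin. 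Your proposed fix, namely that $P_i^{[1]}|_{t=1}$ is a further ``unit shift'' of $\hat P_i$ so that $J(\{P_i^{[1]}\})$ becomes $C\prod_\alpha(h_\alpha+\rho(h_\alpha)+1)$ by translation, cannot work: $P_i^{[1]}|_{t=1}$ equals $P_i$ on the nose, with no extra shift, and in any case no single $\nu\in\gt h^*$ satisfies $\nu(h_\alpha)=1$ for all $\alpha\in\Delta^+$, so the target Jacobian is not a translate of the classical one. The entire discrepancy lives in the \emph{weighted derivative}, not in the polynomial.

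The actual argument (the paper's) is short: the constant term of $\partial_{h_j}P_i^{[1]}|_{t=1}$ comes from the linear-in-$\gt h$ part of $P_i^{[1]}$, which, since $H_i$ is homogeneous of degree $d_i$ and $T$ puts one factor of $t$ on each letter, sits in $t$-degree $d_i$; the rule $\partial_{h_j}(h_jt^{d_i})=d_it^{d_i-1}$ therefore multiplies the constant term of the $i$-th row of the Jacobian matrix by $d_i$ compared with the ordinary derivative of $P_i$. Hence $J(\{P_i^{[1]}\})(0)=(d_1\cdots d_n)\,J(\{P_i\})(0)=|W|\,C\prod_{\alpha\in\Delta^+}\rho(h_\alpha)$, and Kostant's identity $\prod_{\alpha\in\Delta^+}\bigl(\rho(h_\alpha)+1\bigr)/\rho(h_\alpha)=|W|$ converts this into $C\prod_{\alpha\in\Delta^+}(\rho(h_\alpha)+1)$. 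Note also that your ``consistency check'' $\prod_\alpha(\rho(h_\alpha)+1)=\prod_i d_i=|W|$ is not a correct identity (for $\gt{sl}_3$ the left-hand side is $12$ while $|W|=6$); the true statement is that the \emph{ratio} $\prod_\alpha(\rho(h_\alpha)+1)/\prod_\alpha\rho(h_\alpha)$ equals $|W|$, and it is this ratio, not a shift of variables, that produces the ``$+1$'' in each factor.
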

\begin{proof}
Clearly 
$P_i^{[1]}|_{t=1} = P_i$. 
Since $H_i$ is a homogeneous polynomial of degree $d_i$, 
the linear  in $\gt h$ part of  $P_i^{[1]}$ has degree $d_i$ in 
$t$. 
It follows that 
$$
J(\{P_i^{[1]}\})(0)=(d_1\ldots d_n) J(\{P_i\})(0)=|W| \,C \prod\limits_{\alpha\in\Delta^+} \rho(h_\alpha). 
$$
According to a formula of  Kostant:
\begin{equation}\label{KF}
\prod\limits_{\alpha\in\Delta^+} \frac{\rho(h_\alpha)+1}{\rho(h_\alpha)} = |W^\vee|=|W|.
\end{equation}
This completes the proof. 
\end{proof}

\begin{rmk}
The Kostant formula \eqref{KF} is a particular case of another combinatorial statement. 
Let $W(t)=\sum\limits_{w\in W} t^{{\sc l}(w)}$ be the Poincar{\'e} polynomial of $W$.
Then
$$
W^{\vee}(t)=\prod_{\alpha\in\Delta^+} \frac{t^{(\rho,\alpha^{\vee})+1}-1}{t^{(\rho,\alpha^\vee)}-1},
$$
see Equation~(34) in  \cite[Sec.~3.20]{Ref-b}.
Since $\rho(h_\alpha)=(\rho,\alpha^\vee)$, evaluating at $t=1$ one gets exactly Eq.~\eqref{KF}. 
\end{rmk}

\begin{ex} Take $\gt g=\gt{sl}_2$ with the usual basis $\{e,h,f\}$. 
Then $H=H_1=4ef+h^2$, $H^{[1]}=4etft+(ht)^2$, and  
$$
{\mathcal P}^{[1]}=\varpi(H^{[1]})=2etft+2ftet+(ht)^2=(ht)^2+2ht^2+4ftet. 
$$
Therefore $P^{[1]}=P_1^{[1]}=(ht)^2+2ht^2$. Computing the partial derivative and 
evaluating at $t=1$, we obtain $J(\{P^{[1]}\})=2h+4=2(h+2)$. Observe that 
$\rho(h)=1$. 
\end{ex}


\section{Takiff Lie algebras and branching} 

Theorem~\ref{th-eq} can be interpreted as a statement in representation theory of Takiff Lie algebras 
$$\gt q=\gt g{\ltimes}\gt g^{\rm ab}\cong\gt g[u]/(u^2).$$ 
The first factor, the non-Abelian copy of $\gt g$, acts on $\gt g^{\rm ab}=\gt g u$ as a 
subalgebra of $\gt{gl}(\gt g)$. Therefore there is the canonical embedding 
$\q\subset \gt{gl}(\gt g){\ltimes}\gt g^{\rm ab}$. Set $\ell=\dim\gt g+1$. 
In its turn,  $\gt{gl}(\gt g){\ltimes}\gt g^{\rm ab}$ can be realised as a subalgebra of 
$\gt{gl}(\gt g{\oplus}\mK)\cong \gt{gl}_\ell(\mK)$.  The Lie algebra 
$\gt{gl}_\ell(\mK)$ is equipped with the standard triangular decomposition. 
Let $\gt b_\ell\subset \gt{gl}_\ell(\mK)$ be the corresponding positive Borel. 
Recall that we have chosen a triangular decomposition $\gt g=\gt n^-\oplus\gt h\oplus \gt n^+$.
Set $\gt b=\gt h{\oplus}\gt n^+$, $\gt b^-=\gt h{\oplus}\gt n^-$.   
We fix an embedding $\gt{gl}(\gt g){\ltimes}\gt g^{\rm ab}\subset  \gt{gl}(\gt g{\oplus}\mK)$ such that 
$\gt b^-{\ltimes}\gt g^{\rm ab}$ lies in  
the opposite Borel $\gt b_\ell^-$ 
and $\gt b\subset\gt b_\ell$. 

Let $\psi\!: \cS(\gt g)\to \cS(\gt q)$ be a $\mK$-linear map sending 
a monomial $\xi_1\ldots \xi_d$ with $\xi_i\in\gt g$ to 
$$
\sum_{i=1}^d \xi_1\ldots \xi_{i{-}1} (\xi_i u) \xi_{i{+}1}\ldots \xi_d\,. 
$$
Set ${\mathcal R}_i=\varpi(\psi(H_i))$. The elements 
${\mathcal R}_1,\ldots,{\mathcal R}_n\in \U(\gt q)$ are not necessary central. 
If we assume that $d_1=2$, then ${\mathcal R}_1\in \mathcal{ZU}(\gt q)$. 
However, since both maps, $\psi$ and $\varpi$, are homomorphisms of $\gt g$-modules, 
each  ${\mathcal R}_i$ commutes with  $\gt g$. 
Note that  the elements ${\mathcal R}_i$ have degree $1$ in $\gt g u$. 
They are crucial for  further considerations and our next goal is to
relate them to ${\mathcal P}_i^{[1]}\in\U(t\gt g[t])$. 

The map $\psi$ is also well-defined for the tensor algebra of $\gt g$, but not 
for $\U(\gt g)$, because of the following obstacle
$$
\psi(\xi_1\xi_2{-}\xi_2\xi_1)=(\xi_1 u)\xi_2+\xi_1(\xi_2 u)-(\xi_2 u)\xi_1-\xi_2 (\xi_1 u)=
[\xi_1 u,\xi_2]+[\xi_1,\xi_2u]=2[\xi_1,\xi_2]u \ne [\xi_1,\xi_2]u. 
$$
The remedy is to pass to the current algebras $\gt g[t]$ and $\gt q[t]$. 
Let ${\mathcal T}\!: \U(t\gt g[t])\to \U(\gt q[t])$ be a $\mK$-linear map
such that 
$$
\begin{array}{l}
 {\mathcal T}(\xi t^k)=k (\xi u)t^{k{-}1} \ \text{ for each } \ \xi\in\gt g,  \\ 
 {\mathcal T}(ab)= {\mathcal T} (a)b+a {\mathcal T}(b) \ \text{ for all } \ 
 a,b\in\U(t\gt g[t]), \ {\mathcal T} \text{ is a derivation}. 
\end{array} 
$$
Of course, one has to check that ${\mathcal T}$ exists. 


\begin{lm}\label{T}
The map ${\mathcal T}$ is well-defined. 
\end{lm}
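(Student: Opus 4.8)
The plan is to show that $\mathcal{T}$ descends from the tensor algebra to $\U(t\gt g[t])$, i.e.\ that it kills the two-sided ideal generated by the relations $\xi t^k \otimes \eta t^l - \eta t^l \otimes \xi t^k - [\xi,\eta]\, t^{k+l}$ for $\xi,\eta\in\gt g$ and $k,l\ge 1$. First I would define a genuine linear map $\widetilde{\mathcal T}$ on the tensor algebra $\Tc(t\gt g[t])$ by declaring it to be the derivation extending $\xi t^k \mapsto k(\xi u)t^{k-1}$; such a derivation always exists on a tensor (free associative) algebra, so no relations are needed at this stage. The work is then entirely in checking that $\widetilde{\mathcal T}$ respects the defining relations of the enveloping algebra.

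The key computation is to apply $\widetilde{\mathcal T}$ to a single relation element $r = \xi t^k\, \eta t^l - \eta t^l\, \xi t^k - [\xi,\eta]\, t^{k+l}$. Using the derivation property term by term, I get
$$
\widetilde{\mathcal T}(r) = k(\xi u)t^{k-1}\cdot \eta t^l + \xi t^k \cdot l(\eta u)t^{l-1} - l(\eta u)t^{l-1}\cdot \xi t^k - \eta t^l \cdot k(\xi u)t^{k-1} - (k+l)([\xi,\eta]u)t^{k+l-1}.
$$
Now in $\U(\gt q[t])$ the first four terms can be rearranged modulo the relations of $\U(\gt q[t])$: each product $(\xi u)t^{k-1}\cdot \eta t^l$ equals $\eta t^l\cdot (\xi u)t^{k-1} + [\xi u,\eta]\,t^{k+l-1} = \eta t^l\cdot(\xi u)t^{k-1} + [\xi,\eta]u\, t^{k+l-1}$, and similarly for the others; here I use that $[\gt g u, \gt g]\subset \gt g u$ with $[\xi u,\eta] = [\xi,\eta]u$ in $\gt q$. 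Collecting: the first and fourth terms combine to give $k\,[\xi,\eta]u\,t^{k+l-1}$, the second and third combine to give $l\,[\xi,\eta]u\,t^{k+l-1}$, and together these cancel the last term $-(k+l)([\xi,\eta]u)t^{k+l-1}$. Hence $\widetilde{\mathcal T}(r) \equiv 0$ in $\U(\gt q[t])$.

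To finish, I would note that since $\widetilde{\mathcal T}$ is a derivation, $\widetilde{\mathcal T}(a\,r\,b) = \widetilde{\mathcal T}(a)\,r\,b + a\,\widetilde{\mathcal T}(r)\,b + a\,r\,\widetilde{\mathcal T}(b)$; modulo the relations of $\U(\gt q[t])$ the first and third terms vanish because $r$ maps to zero there, and the middle term vanishes by the computation above. Thus the composite map $\Tc(t\gt g[t]) \xrightarrow{\widetilde{\mathcal T}} \U(\gt q[t])$ annihilates the full ideal defining $\U(t\gt g[t])$, so it factors through a well-defined linear map $\mathcal{T}\colon \U(t\gt g[t])\to\U(\gt q[t])$, which by construction satisfies the two stated properties (the derivation identity passes to the quotient since both sides are well-defined there). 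The only mild subtlety — and the step I would be most careful about — is the bookkeeping of the cross-terms: one must consistently move factors past one another using the commutation relations in $\U(\gt q[t])$ rather than in the tensor algebra, and track that the ``extra'' commutators $[\xi u,\eta]$, $[\xi,\eta u]$ all collapse to $[\xi,\eta]u$ with the correct coefficients $k$ and $l$ so that the total is exactly $k+l$.
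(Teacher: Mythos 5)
Your proposal is correct and its core computation — applying the derivation to the relation $\xi t^k\,\eta t^l-\eta t^l\,\xi t^k-[\xi,\eta]t^{k+l}$ and observing that the commutators $[\xi u,\eta]$ and $[\xi,\eta u]$ both collapse to $[\xi,\eta]u$ with coefficients $k$ and $l$ summing to $k+l$ — is exactly the paper's proof. The extra scaffolding you add (defining the derivation on the tensor algebra first and using the Leibniz rule to reduce to the generating relations) is routine and only makes explicit what the paper leaves implicit.
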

\begin{proof}
Take $\xi,\eta\in\gt g$. Then 
$$
 \begin{array}{l}
{\mathcal T}(\xi t^k \eta t^m - \eta t^m\xi t^k)=
k (\xi u) t^{k{-}1} \eta t^m + m \xi t^k (\eta u) t^{m{-}1} - 
m (\eta u) t^{m{-}1} \xi t^{k} - k \eta t^m (\xi u) t^{k{-}1}  = \\
\qquad \enskip = k[\xi u,\eta] t^{k{-}1{+}m} + m[\xi, \eta u] t^{k{+}m{-}1} 
 = (k{+}m) ([\xi,\eta]u) t^{k{+}m{-}1}={\mathcal T}([\xi,\eta]t^{k{+}m}). 
\end{array}
$$
\end{proof}


Now, having the map ${\mathcal T}$, we can state that 
${\mathcal R}_i={\mathcal T}({\mathcal P}_i^{[1]})|_{t=1}$. 

A word of caution, in $\U(\gt b^-)(\gt n^+ u)$ and similar expressions, 
$(\gt n^+ u)$ stands for the subspace $\gt n^+ u\subset\gt g^{\rm ab}$ and {\bf not} 
for an ideal generated by $\gt n^+ u$. The same applies to $(\gt b u)$, $(\gt g u)$, etc.

\begin{lm}  \label{lc}
Let $M_\lambda=\U(\gt b_{\ell}^-)v_\lambda$ with $\lambda\in\mK^{\ell}$ be a Verma module of $\gt{gl}(\gt g{\oplus}\mK)$. Set $\mu=\lambda|_{\gt h}$.  
There exists a non-trivial linear combination ${\mathcal R}=\sum c_i {\mathcal R}_i$ such that 
${\mathcal R}v_\lambda \in  \gt n^-\U(\gt b^-)(\gt n^+ u) v_\lambda$ if and only if 
$J(\{P_i^{[1]}\})(\mu)=0$. 
\end{lm}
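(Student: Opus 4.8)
The plan is to relate the condition ``${\mathcal R}v_\lambda\in \gt n^-\U(\gt b^-)(\gt n^+u)v_\lambda$'' to the Harish-Chandra projections $P_i^{[1]}$ by computing, for each generator ${\mathcal R}_i$, the component of ${\mathcal R}_iv_\lambda$ lying in the ``Cartan-like'' part $\U(\gt h[t])(\gt h u)v_\lambda$ modulo the ``lower'' part $\gt n^-\U(\gt b^-)(\gt n^+u)v_\lambda + \gt n^-(\gt g u)v_\lambda$. First I would set up a PBW-type decomposition of $\U(\gt q)v_\lambda$ adapted to the triangular decomposition $\gt q=(\gt n^-{\ltimes}\gt n^-u)\oplus(\gt h{\ltimes}\gt h u)\oplus(\gt n^+{\ltimes}\gt n^+u)$, and observe that since each ${\mathcal R}_i$ commutes with $\gt g$ (in particular with $\gt n^+$ and $\gt h$), the weight of ${\mathcal R}_iv_\lambda$ is the same as that of $v_\lambda$, and ${\mathcal R}_iv_\lambda$ is a $\gt h$-weight vector of $\U(\gt q)v_\lambda$. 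The key point is that modulo $\gt n^-\U(\gt q)v_\lambda$ one can project ${\mathcal R}_iv_\lambda$ onto $\U(\gt h u)v_\lambda$, and because ${\mathcal R}_i$ has degree exactly $1$ in $\gt g u$, this projection is linear in $\gt h u$ and thus can be identified with $\partial_{h_j}P_i^{[1]}|_{t=1}$ — indeed this is precisely the content of the identities ${\mathcal R}_i={\mathcal T}({\mathcal P}_i^{[1]})|_{t=1}$ combined with the definition of $J(\{P_i^{[1]}\})$ as $\det(\partial_{h_j}P_i^{[1]})|_{t=1}$.

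The second step is to make the ``only if'' and ``if'' directions precise. Write ${\mathcal R}v_\lambda = \sum_{j} a_j(\mu)\,(h_j u)v_\lambda + (\text{terms in }\gt n^-\U(\gt b^-)(\gt n^+u)v_\lambda)$, where $a_j(\mu)$ is a polynomial in $\mu=\lambda|_{\gt h}$ obtained from the Harish-Chandra projection. The crucial claim is that the coefficient vector $(a_1(\mu),\dots,a_n(\mu))$ of ${\mathcal R}=\sum c_i{\mathcal R}_i$ equals $\sum_i c_i\big(\partial_{h_1}P_i^{[1]},\dots,\partial_{h_n}P_i^{[1]}\big)\big|_{t=1}$ evaluated at $\mu$; here I must track carefully that the action of $h u\in\gt h u$ on $v_\lambda$ contributes nothing to the eigenvalue (since $\gt g^{\rm ab}$ acts by the given embedding with $\gt g^{\rm ab}\subset\gt b_\ell^-$ mapping $\gt h u$ into the strictly-lower part, so $(h u)v_\lambda$ is genuinely a new vector, not a multiple of $v_\lambda$), which is what guarantees that the ``$(h_j u)v_\lambda$''-component is detected on the nose rather than being absorbed into $\mK v_\lambda$. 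Then ${\mathcal R}v_\lambda\in\gt n^-\U(\gt b^-)(\gt n^+u)v_\lambda$ is equivalent to all $a_j(\mu)=0$, i.e.\ the vector $(c_1,\dots,c_n)$ lies in the kernel of the $n\times n$ matrix $\big(\partial_{h_j}P_i^{[1]}\big|_{t=1}\big)(\mu)$, and a nontrivial such $(c_i)$ exists exactly when that matrix is singular, i.e.\ when $J(\{P_i^{[1]}\})(\mu)=\det(\partial_{h_j}P_i^{[1]})|_{t=1}(\mu)=0$.

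The main obstacle I anticipate is the bookkeeping in the PBW reduction: one has to check that when rewriting ${\mathcal R}_iv_\lambda$ in PBW order relative to $\gt q$, no contributions from the $\gt n^+u$-part (or from commutators dragging $\gt n^+$ past $\gt h u$) sneak back into the $\U(\gt h u)v_\lambda$-component in a way that is not captured by the formal derivation ${\mathcal T}$ and the Harish-Chandra projection. Concretely, one needs that the Harish-Chandra projection for $\gt q[t]$ (projecting $\U(\gt q[t])$ onto $\U(\gt h[t]\oplus\gt h u[t])$ along $\gt n^-[t]\U(\gt q[t])+\U(\gt q[t])\gt n^+[t]$, appropriately modified by the $u$-grading) intertwines ${\mathcal T}$ with $\partial$, so that the image of ${\mathcal T}({\mathcal P}_i^{[1]})$ is exactly $\sum_j \partial_{h_j}P_i^{[1]}\cdot(h_j u)$ plus terms that vanish on $v_\lambda$; establishing this compatibility is where the extremal projector (promised in the introduction for the full proof of Theorem~\ref{th-eq}) or a direct careful commutator computation will be needed. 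Once that intertwining is in hand, the equivalence in the lemma is immediate from the nonvanishing of $J(\{P_i^{[1]}\})$ being the determinant of the coefficient matrix.
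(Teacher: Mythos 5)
Your strategy coincides with the paper's: split ${\mathcal R}_iv_\lambda$ into the Cartan part $\sum_j(\partial_{h_j}P_i^{[1]})|_{t=1,\mu}\,(h_ju)v_\lambda$ plus a remainder in $\gt n^-\U(\gt b^-)(\gt n^+ u)v_\lambda$, then use the freeness of the $\gt{gl}_\ell(\mK)$-Verma module over $\U(\gt n_\ell^-)$ together with $\gt h u\cap\gt n^+u=0$ to read off the coefficients, so that a nontrivial $(c_i)$ annihilating all of them exists exactly when the matrix $\bigl(\partial_{h_j}P_i^{[1]}\bigr)|_{t=1}(\mu)$ is singular. Both implications are then handled as you describe, and your observation that $(h_ju)v_\lambda$ is a genuinely new vector rather than a multiple of $v_\lambda$ is precisely the point of embedding $\gt q$ into $\gt{gl}_\ell(\mK)$ with $\gt g^{\rm ab}\subset\gt b_\ell^-$.

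However, the step you defer --- that nothing from the non-Cartan part of ${\mathcal P}_i^{[1]}$ leaks into the $(h_ju)v_\lambda$-component --- is the entire substance of the proof, and it does not require the extremal projector: the paper reserves that tool for Theorem~\ref{th-eq} itself, not for this lemma. It is settled by exactly the ``direct careful commutator computation'' you mention. Starting from ${\mathcal P}_i^{[1]}\in P_i^{[1]}+\gt n^-[t]\U(\gt g[t])\gt n^+[t]$ and using that ${\mathcal T}$ is a derivation, every monomial $X=x_1\cdots x_r$ coming from ${\mathcal T}\bigl(\gt n^-[t]\U(\gt g[t])\gt n^+[t]\bigr)|_{t=1}$ carries exactly one $u$-factor; if the rightmost factor lies in $\gt n^+$ it kills $v_\lambda$, and otherwise the rightmost factor is that unique $u$-factor and lies in $\gt n^+u$, with $x_1\in\gt n^-$ and $x_2,\dots,x_{r-1}\in\gt g$. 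Pushing any remaining $\gt n^+$-factors to the right by commutators (noting $[\gt n^+,\gt n^+u]\subset\gt n^+u$, so the rightmost slot stays in $\gt n^+u$ and never turns into $\gt h u$) rewrites $Xv_\lambda$ as an element of $\U(\gt b^-)(\gt n^+u)v_\lambda$, and since $X$ has $\gt h$-weight zero the result lies in $\gt n^-\U(\gt b^-)(\gt n^+u)v_\lambda$. With this reduction in hand your argument goes through verbatim; without it, the proof is incomplete at its central point.
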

\begin{proof}
We have 
$$
{\mathcal P}_i^{[1]}  \in P_i^{[1]} + \gt n^-[t] \U(\gt g[t])\gt n^+[t]. 
$$
Accordingly ${\mathcal R}_i= {\mathcal T}(P_i^{[1]})|_{t=1} + {\mathcal X}$, where 
${\mathcal X}$ is the image of the second summand of ${\mathcal P}_i^{[1]} $.
Let $X=x_1\ldots x_r$ be a monomial appearing in ${\mathcal X}$. 
If $x_r\in \gt n^+$, then $X v_\lambda=0$. Assume that $X v_\lambda\ne 0$. 
Then necessary $x_r\in \gt n^+ u$ and $x_1,\ldots,x_{r-1}\in\gt g$. If $x_i\in\gt n^+$ for some $i\le (r{-}1)$, then 
we replace $X$ by $x_1\ldots x_{i{-}1}[x_i,x_{i+1}\ldots x_r]$. Note that here 
$[x_i,x_r]\in \gt n^+ u$. Applying this procedure as long as possible one 
replaces $X$ by an element of  $\U(\gt b^-)(\gt n^+ u)$ 
without altering $Xv_\lambda$. Since $X$ is an invariant of  $\gt h$, the
new element lies in $\gt n^- \U(\gt b^-)(\gt n^+ u)$. 
Summing up, 
\begin{equation}\label{T-t}
{\mathcal R}_i v_\lambda \in {\mathcal T}(P_i^{[1]})|_{t=1}v_\lambda +  \gt n^- \U(\gt b^-)(\gt n^+ u) v_\lambda.
\end{equation}
Further
$$
{\mathcal T}(P_i^{[1]})|_{t=1}=\sum\limits_{j=1}^n (\partial_{h_j} P_i^{[1]})|_{t=1} h_j u,
$$  
where the partial derivatives are understood in the sense of the introduction. 
Exactly these derivatives have been used in order to define 
 $J(\{P_i^{[1]}\})$. 
Hence $J(\{P_i^{[1]}\})(\mu)=0$ if and only if there is a non-zero vector 
$\bar c=(c_1,\ldots,c_n)$ such that $\sum c_i {\mathcal T}(P_i^{[1]})|_{t=1,\mu}=0$. This shows that if 
 $J(\{P_i^{[1]}\})(\mu)=0$, then ${\mathcal R}v_\lambda \in  \gt n^-\U(\gt b^-)(\gt n^+ u) v_\lambda$.
 
Suppose now that 
${\mathcal R}v_\lambda \in \gt n^-\U(\gt b^-)(\gt n^+ u) v_\lambda\subset \U(\gt n^-)(\gt n^+ u)v_\lambda$. 
Then 
$$
\sum c_i {\mathcal T}(P_i^{[1]})|_{t=1,\mu} v_\lambda \in \U(\gt n^-)(\gt n^+ u) v_\lambda.
$$
Since we are working with a Verma module of $\gt{gl}_{\ell}(\mK)$
and since ${\mathcal T}(P_i^{[1]})|_{t=1,\mu}\in \gt h u \subset \gt n_{\ell}^-$, 
$\gt n^+ u\subset \gt n_{\ell}^-$, we have 
$$\sum c_i {\mathcal T}(P_i^{[1]})|_{t=1,\mu} \in \U(\gt n^-)(\gt n^+ u).$$ 
At the same time  $\gt h u\cap \gt n^+ u=0$. 
Therefore $\sum\limits_{i=1}^n  c_i(\partial_{h_j} P_i^{[1]})|_{t=1,\mu}=0$ for each $j$ and 
thus $J(\{P_i^{[1]}\})(\mu)=0$. 
\end{proof}

For $\gamma\in\gt h^*$, let  $M_{\lambda,\gamma}$ be the corresponding 
weight subspace of $\U(\gt q)v_\lambda\subset M_\lambda$. 
Since $\gt h u \subset \gt n_\ell^-$, either $M_{\lambda,\gamma}=0$ or 
$\dim M_{\lambda,\gamma}=\infty$. We have also $(\gt h u)v_\lambda \ne 0$.  
Because of these facts,  the $\gt q$-modules $M_\lambda$ and 
$\U(\gt q)v_\lambda$ do not fit in the framework of the highest weight theory 
developed in \cite{Geof,Wil}. 
Nevertheless, they may have some nice features.

Lemma~\ref{lc} relates $J(\{P_i^{[1]}\})$ to a property of 
the branching 
$\gt q\downarrow \gt g$ in a particular case of the $\gt q$-module 
$\U(\gt q)v_\lambda$. 
 In order to get a better understanding  of this branching problem, we employ 
a certain projector introduced by 
Asherova, Smirnov, and Tolstoy in \cite{Proj}.

\subsection{The extremal projector} \label{sec-proj}

Recall that  $\{f_{\alpha},h_\alpha,e_\alpha\}\subset \gt g$ is the
$\gt{sl}_2$-triple corresponding to $\alpha\in\Delta^+$.  
 Set 
$$
p_\alpha = 1 + \sum\limits_{k=1}^{\infty} f_{\alpha}^k e_\alpha^k  \frac{(-1)^k}{k! (h_\alpha+\rho(h_\alpha)+1)\ldots (h_\alpha+\rho(h_\alpha)+k)}. 
$$
Set $N=|\Delta^+|$. Choose a numbering of positive roots, $\alpha_1,\ldots,\alpha_N$.
Each $p_\alpha$, as well as any product of finitely many of them,  is a formal series with coefficients in $\mathbb C(\gt h^*)$ in monomials 
$$
f_{\alpha_1}^{r_1}\ldots f_{\alpha_N}^{r_N} e_{\alpha_N}^{k_N} \ldots e_{\alpha_1}^{k_1}
\ \text{ such that } \ (k_1-r_1)\alpha_1+\ldots + (k_N-r_N)\alpha_N=0. 
$$  
A total order on $\Delta^+$ is said to be  {\it normal}  if 
either $\alpha < \alpha+\beta < \beta $ or $\beta < \alpha+\beta < \alpha$ for each pair of positive roots 
$\alpha, \beta$ such that $\alpha+\beta\in\Delta$.  There is a bijection between the normal orders and
the reduced decompositions of the longest element of $W$.  

Choose a normal order $\alpha_1<\ldots <\alpha_N$, 
and define 
$$
p=p_{\alpha_1}\ldots p_{\alpha_N} 
$$
accordingly. 
The element $p$ is called the {\it extremal projector}.  It is independent of the choice of a normal order. 
For proofs and more details on this operator see \cite[\S 9.1]{book}. Most importantly, it has the property that
\begin{equation}\label{eq-p}
e_\alpha p=p f_{\alpha}=0
\end{equation}
 for each $\alpha$. 

The nilpotent radical $\gt n_\ell\subset \gt b_\ell$ acts on $M_\lambda$ locally 
nilpotently. Recall that $\gt n^+\subset \gt n_\ell$. 
Let $v\in M_\lambda$ be an eigenvector of $\gt h$ of weight $\gamma\in\gt h^*$. 
First of all, $pv$
is a finite sum of vectors of $M_\lambda$ with coefficients in $\mathbb C(\gt h^*)$.   
Second, if all the appearing denominators are non-zero at $\gamma$, then $pv$ is a well-defined 
vector of $M_\lambda$ of the same weight $\gamma$.

\section{Proof of Theorem~\ref{th-eq}}

Let $\lambda$,  $\mu$, and $M_\lambda$ be as in Lemma~\ref{lc}.  Keep in mind that  
$\lambda$ and $\mu$ are arbitrary elements of $\mK^\ell$ and $\mK^n$. 
Since each
${\mathcal R}_i$  commutes with  $\gt g$, 
each ${\mathcal R}_i v_\lambda$ is a highest weight vector of $\gt g$.


We use the extremal projector $p$ associated with $\gt g$. 
If $p$ can be applied to a highest weight vector $v$, then $pv=v$. 
Suppose that $p$  is defined at $\mu$. Then, in view of \eqref{T-t} and \eqref{eq-p},   
$$
{\mathcal R}_i v_\lambda=  p{\mathcal R}_i v_\lambda =  p {\mathcal T}(P_i^{[1]} )|_{t=1} v_\lambda .
$$
Assume that $J(\{P_i^{[1]}\})(\mu) = 0$. Then there is a non-trivial linear combination 
${\mathcal R}=\sum c_i {\mathcal R}_i$
such  that 
$$
{\mathcal R} v_\lambda \in \gt n^-\U(\gt b^-)(\gt n^+ u) v_\lambda, 
$$
see Lemma~\ref{lc}. Here $p {\mathcal R} v_\lambda =0$ and hence 
${\mathcal R} v_\lambda =0$ as well. 

Since we are considering a Verma module of $\gt{gl}_{\ell}(\mK)$, this implies that 
$$
{\mathcal R} \in \U(\gt{gl}_{\ell}(\mK))\gt b_\ell \cap \U(\gt q) = \U(\gt q)\gt b.
$$ 
Hence the symbol ${\rm gr}({\mathcal R})$ of ${\mathcal R}$ lies in the ideal of $\cS(\gt q)$ generated by $\gt b$. 

The decomposition $\gt g=\gt n^-{\oplus}\gt b$ defines a bi-grading on $\cS(\gt g)$.  
Let $H_i^\bullet$ be the bi-homogeneous component of 
$H_i$ having the  highest degree w.r.t. $\gt n^-$. 
According to  \cite[Sec.~3]{py-feig},  $H_i^\bullet\in \gt b\cS^{d_i{-}1}(\gt n^-)$ and
the polynomials $H_1^\bullet,\ldots,H_n^\bullet$ are algebraically independent. 
We have $$
\psi(H_i^\bullet) \in (\gt b u) \cS^{d_i{-}1}(\gt n^-) \oplus \gt b(\gt n^- u)\cS^{d_i{-}2}(\gt n^-).
$$ 
Write this as $\psi(H_i^\bullet)\in H_{i,1} + \gt b(\gt n^- u)\cS^{d_i{-}2}(\gt n^-)$.
Then the polynomials $H_{i,1}$ with $1\le i\le n$ are still algebraically independent. 
As can be easily seen, 
$\psi(H_i)\in H_{i,1} + \gt b \cS(\gt q)$. 

Set $d=\max\limits_{i, c_i\ne 0} d_i$. Then 
$$ {\rm gr}({\mathcal R})=\sum\limits_{i, \, d_i=d} c_i \psi(H_i)$$
and it lies in $(\gt b)\lhd \cS(\gt q)$ if and only if 
$\sum\limits_{i, \, d_i=d} c_i H_{i,1}=0$. Since at least one $c_i$ in this linear combination is 
non-zero, we get a contradiction. The following is settled: 
if $p$ is defined at $\mu$, then 
$J(\{P_i^{[1]}\})(\mu)\ne 0$. 

Now we know that 
the zero set of 
$J(\{P_i^{[1]}\})$ lies in the union of hyperplanes $h_\alpha+\rho(h_\alpha)=-k$ with $k\ge 1$. 
 At the same time this zero set is an affine subvariety of $\mathbb C^n$ of codimension one. 
Therefore it is the union of $N$ hyperplanes and $J(\{P_i^{[1]}\})$ is the product of $N$ linear factors of the form $(h_\alpha+\rho(h_\alpha)+k_\alpha)$. A priory, a root $\alpha$ may appear in several factors with different constants $k_\alpha$.  

By Lemma~\ref{highest}, the highest degree component of 
$J(\{P_i^{[1]}\})$ is equal to $C \prod\limits_{\alpha\in\Delta^+} h_\alpha$.
Therefore each $\alpha\in\Delta^+$ must appear in  exactly one linear factor of  $J(\{P_i^{[1]}\})$.   
Observe that $\rho(h_\alpha)\ge 1$ and that $\rho(h_\alpha)+k_\alpha \ge \rho(h_\alpha)+1$. 
If for some $\alpha$, we have $k_\alpha > 1$, 
then $$|J(\{P_i^{[1]}\})(0)|> |C| \prod\limits_{\alpha\in\Delta^+} (\rho(h_\alpha)+1).$$ But this cannot be the case
in view of Lemma~\ref{free}. 
 \qed

\section{Conclusion} 

The elements ${\mathcal R}_i$ are rather natural $\gt g$-invariants in $\U(\gt q)$ of degree one 
in $\gt g u$. Note that because $\gt g u$ is an Abelian ideal of $\gt q$, where is no ambiguity in defining the 
degree in $\gt g u$. The involvement  of these elements in the branching rules $\gt q\downarrow \gt g$ 
remains unclear. However, combining Lemma~\ref{lc} with Theorem~\ref{th-eq}, we obtain the following statement. 

\begin{cl}\label{cl-c} In the notation of Lemma~\ref{lc}, there is a non-trivial linear combination 
${\mathcal R}=\sum c_i {\mathcal R}_i$ such that 
${\mathcal R} v_\lambda \in \gt n^- \U(\gt q) v_\lambda$ if and only if 
$\mu(h_\alpha)=-\rho(h_\alpha)-1$ for some $\alpha\in\Delta^+$. \qed
\end{cl}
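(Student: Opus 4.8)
The plan is to deduce the statement by combining Lemma~\ref{lc} and Theorem~\ref{th-eq}, the only extra input being a mild strengthening of the argument that proves Lemma~\ref{lc}. By Theorem~\ref{th-eq} the equality $J(\{P_i^{[1]}\})(\mu)=0$ holds precisely when $\mu(h_\alpha)=-\rho(h_\alpha)-1$ for some $\alpha\in\Delta^+$, so it suffices to show that a non-trivial combination ${\mathcal R}=\sum c_i{\mathcal R}_i$ with ${\mathcal R}v_\lambda\in\gt n^-\U(\gt q)v_\lambda$ exists if and only if $J(\{P_i^{[1]}\})(\mu)=0$. One implication is immediate: if $J(\{P_i^{[1]}\})(\mu)=0$, then Lemma~\ref{lc} supplies such an ${\mathcal R}$ with the stronger property ${\mathcal R}v_\lambda\in\gt n^-\U(\gt b^-)(\gt n^+ u)v_\lambda$, and this subspace is contained in $\gt n^-\U(\gt q)v_\lambda$ since $\gt b^-,\gt n^+ u\subset\gt q$.

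For the reverse implication I would start exactly as in the proof of Lemma~\ref{lc}. Setting $b_j=\sum_i c_i(\partial_{h_j}P_i^{[1]})|_{t=1,\mu}$, formula \eqref{T-t} gives ${\mathcal R}v_\lambda\in v+\gt n^-\U(\gt b^-)(\gt n^+ u)v_\lambda$ with $v=\sum_j b_j(h_ju)v_\lambda$, and since $\gt n^-\U(\gt b^-)(\gt n^+ u)v_\lambda\subseteq\gt n^-\U(\gt q)v_\lambda$ the hypothesis forces $v\in\gt n^-\U(\gt q)v_\lambda$. Because $\gt n^+$ kills $v_\lambda$ and $\gt h$ acts on $v_\lambda$ by a scalar, one has $\U(\gt q)v_\lambda=\U(\gt q^-)v_\lambda$ where $\gt q^-:=\gt n^-\oplus\gt g u$, and since $\gt q^-\subset\gt n_\ell^-$ the map $z\mapsto zv_\lambda$ from $\U(\gt q^-)$ to $M_\lambda$ is injective; hence the inclusion $v\in\gt n^-\U(\gt q)v_\lambda$ becomes $\sum_j b_j(h_ju)\in\gt n^-\U(\gt q^-)$ inside $\U(\gt q^-)$. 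Here I would use that $\gt g u$ is an abelian ideal of $\gt q^-$: the PBW theorem yields a vector space identity $\U(\gt q^-)=\U(\gt n^-)\cdot\cS(\gt g u)$, whence $\gt n^-\U(\gt q^-)=\bigl(\gt n^-\U(\gt n^-)\bigr)\cdot\cS(\gt g u)$ meets $\cS(\gt g u)$ only in $0$. As $\sum_j b_j(h_ju)$ lies in $\cS(\gt g u)$, all $b_j$ vanish; that is, the matrix $\bigl((\partial_{h_j}P_i^{[1]})|_{t=1,\mu}\bigr)_{i,j}$ annihilates the non-zero vector $(c_i)$, so its determinant, which is $J(\{P_i^{[1]}\})(\mu)$, is $0$. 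By Theorem~\ref{th-eq} this gives $\mu(h_\alpha)+\rho(h_\alpha)+1=0$ for some $\alpha$.

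The only step that goes beyond a purely formal combination of Lemma~\ref{lc} and Theorem~\ref{th-eq} is this last one, where the subspace $\gt n^-\U(\gt b^-)(\gt n^+ u)v_\lambda$ of Lemma~\ref{lc} is replaced by the a~priori larger $\gt n^-\U(\gt q)v_\lambda$: in Lemma~\ref{lc} the relevant intersection was read off from PBW monomials, whereas now one must check that the left ideal $\gt n^-\U(\gt q^-)$ meets $\cS(\gt g u)$ trivially. This is elementary but not quite automatic, and I expect it to be the only real content of the proof. (One could instead try to imitate the extremal-projector argument used for Theorem~\ref{th-eq}, but that would only apply on the open locus where $p$ is defined, and the hyperplanes $h_\alpha+\rho(h_\alpha)+k=0$ with $k\ge 2$ would then have to be excluded separately; the direct computation above avoids this.)
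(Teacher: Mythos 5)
Your proof is correct and follows the paper's intended route of deducing the corollary by combining Lemma~\ref{lc} with Theorem~\ref{th-eq} (the paper states the corollary with no further argument). The one step you add --- using PBW for $\gt q^-=\gt n^-\oplus\gt g u$ to see that the left ideal $\gt n^-\U(\gt q^-)$ meets $\cS(\gt g u)$ trivially, so that membership of ${\mathcal R}v_\lambda$ in the larger subspace $\gt n^-\U(\gt q)v_\lambda$ still forces all the coefficients $b_j$ to vanish and hence $J(\{P_i^{[1]}\})(\mu)=0$ --- is precisely the point the paper leaves implicit, since Lemma~\ref{lc} as stated only concerns the smaller subspace $\gt n^-\U(\gt b^-)(\gt n^+ u)v_\lambda$, and you carry it out correctly.
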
 

As the  theory  of  finite-dimensional representations suggests, it is unusual for a highest weight vector of $\gt g$ to
belong to the image of $\gt n^-$.  The proof of Theorem~\ref{th-eq} shows that ${\mathcal R}v_\lambda\ne 0$ 
for the linear combination of Corollary~\ref{cl-c}. 

\begin{rmk}
The subspace ${\mathcal V}[1]=(\U(\gt g)(\gt g u))^{\gt g}\subset \U(\gt q)$ is a $\mathcal{ZU}(\gt g)$-module. 
From a well-known description of $(\gt g{\otimes}{\mathcal S}(\gt g))^{\gt g}$, one can deduce 
that ${\mathcal V}[1]$ is freely generated by 
${\mathcal R}_1,\ldots,{\mathcal R}_n$ as a $\mathcal{ZU}(\gt g)$-module.  
There are other choices of generators in ${\mathcal V}[1]$ and it is not clear, whether one can get nice formulas for 
the corresponding Jacobians.  
\end{rmk}

\end{document}